\newcommand{\err}{\varepsilon}
\newcommand{\lip}{c} 
\newcommand{\const}{c} 
\newcommand{\defeq}{\vcentcolon=}
\renewcommand{\le}{\leqslant}
\renewcommand{\ge}{\geqslant}
\renewcommand{\leq}{\leqslant} 
\renewcommand{\geq}{\geqslant}
\renewcommand{\emptyset}{\varnothing}
\newcommand{\parfrac}[2]{\frac{\partial{#1}}{\partial{#2}}}
\newcommand{\R}{\mathbb{R}}
\newcommand{\N}{\mathbb{N}}
\newcommand{\dd}{\mathrm{d}}
\newcommand{\diff}{\mathrm{d}}
\newcommand{\xhat}{\hat{x}}
\newcommand{\eps}{\varepsilon}
\renewcommand{\epsilon}{\varepsilon}
\newcommand{\RR}{\mathsf{R}}
\newcommand{\RRR}{\mathcal{R}}
\renewcommand{\O}{\mathrm{O}}
\newcommand{\dt}{\Delta} 
\newcommand{\Obs}{\mathcal{O}}
\newcommand{\KK}{\mathcal{K}}
\newcommand{\sym}{\mathrm{S}^n_{++}}
\newcommand{\e}{\mathrm{e}}
\newcommand{\DD}{\mathcal{D}}
\newcommand{\lambdamax}{\bar{\lambda}} 
\renewcommand{\mid}{\colon}
\DeclareMathOperator{\sat}{sat}
\DeclareMathOperator{\Id}{Id}
\DeclareMathOperator{\Gram}{\mathcal{G}} 
\newtheorem{theorem}{Theorem}[section]
\newtheorem{assumption}[theorem]{Assumption}
\newtheorem{definition}[theorem]{Definition}
\newtheorem{remark}[theorem]{Remark}
\newcommand{\card}[1]{|#1|}
\title{Output feedback stabilization of polynomial state-affine control systems 
using control templates
}
\author[1]{Ludovic Sacchelli}
\author[2]{Lucas Brivadis}
\author[3]{Ulysse Serres}
\author[4]{Ita{\"i} Ben Yaacov}
\affil[1]{\small Inria, Université Côte d’Azur, CNRS, LJAD, MCTAO team, Sophia Antipolis, France
 (email: \texttt{ludovic.sacchelli@inria.fr})}
\affil[2]{\small Université Paris-Saclay, CNRS, CentraleSupélec, Laboratoire des Signaux et Systèmes, 91190, Gif-sur-Yvette, France
(email: \texttt{lucas.brivadis@centralesupelec.fr})
}
\affil[3]{\small Universit\'e Claude Bernard Lyon 1, CNRS, LAGEPP UMR 5007, 43 bd du 11 novembre 1918, F-69100 Villeurbanne, France (emails: \texttt{ulysse.serres@univ-lyon1.fr})}
\affil[4]{\small Université Claude Bernard Lyon 1, Institut Camille Jordan, CNRS UMR 5208,
	43 boulevard du 11 novembre 1918, F-69100 Villeurbanne, France (email: \texttt{begnac@math.univ-lyon1.fr})}
\date{\today}
\begin{document}

\maketitle

\begin{abstract}

We establish a separation principle for the output feedback stabilization of state-affine systems that are observable at the stabilization target.
Relying on control templates (recently introduced in \cite{andrieu:hal-04387614}),
that allow to approximate a feedback control while maintaining observability, we design a closed-loop hybrid state-observer system that we show to be semi-globally asymptotically stable. Under assumption of polynomiality of the system with respect to the control, we give an explicit construction of control templates. We illustrate the results of the paper with numerical simulations.
\end{abstract}

\section{Introduction}
The establishment of a separation principle,
being able to achieve state estimation and state feedback stabilization conjointly allows to achieve semi-global output feedback stabilization, was achieved in the 90s under strict observability constraints. Namely, for \cite{TeelPraly1994} and \cite{jouan1996finite},
the separation principle was obtained under a uniform observability assumption, according to which the system remains observable for any possible input.
Although this approach is successful, it discards the fact that most nonlinear systems are not actually uniformly observable.
For instance, in the bilinear case, systems that are not observable for some constant inputs form an open and dense subset
(see, e.g., \cite{brivadis2023output}). 
Lifting the uniform observability assumption requires to mitigate observability loss in control strategies.
In that regard, hybrid strategies have been fairly successful. For instance, \cite{shim2003asymptotic} proposes a periodic switching strategy between an observable input and a feedback law, allowing for practical stabilization. Other examples include \cite{nesic,coron1994stabilization}, as well as \cite{9363594, preprintMPLLAR} in application of anti-lock braking systems, and recent works by the authors of the present paper \cite{brivadis:hal-03845706,brivadis:hal-03180479}.

Here,
we discuss a technique to maintain observability for
state-affine
systems that are not uniformly observable.
Recently introduced in \cite{andrieu:hal-04387614} for analytic nonlinear systems, control templates are inputs that can be scaled and rotated while guaranteeing observability. When a control template exists, 
it can be paired with a periodically sampled feedback law to design a stabilizing time varying control.
In that way, this technique can be understood as a generalization of sample-and-hold control (see, e.g., \cite{1024472} and references therein).
This strategy has been successfully applied in \cite{andrieu:hal-04387614} to prove a separation principle for analytic systems paired with a high-gain observer.
Also,
as shown in \cite{andrieu:hal-04387614}
in the context of analytic systems,
control templates are generic among analytic inputs.
However,
this result is primarily qualitative, lacking methods to exhibit control templates or verify whether an input is a template, which is a current drawback of the theory.
Even for bilinear systems, checking that a control is a template is not immediate.
For instance, since most bilinear systems admit constant inputs for which they are  not observable, a constant input is not, in general, a control template.

This paper contains two main contributions
that address limitations of \cite{andrieu:hal-04387614}.
Firstly, motivated by bilinear systems, we provide an algorithmic construction of control templates for polynomial state-affine systems, relying on
algebraic insights from the theory
of multivariate polynomials. This makes the control templates method explicit.
Secondly, 
assuming only observability at the target (instead of uniform observability \cite{182484}) and existence of a stabilizing state feedback,
we develop a separation principle for state-affine systems.
To do so, we employ the control template strategy
and use a Kalman-like observer (for which state-affine systems are well-suited) that remains in the original coordinates of the system, contrary to high-gain observers. This also allows to consider discontinuous control templates.

\smallskip
{\it Notation}.
We denote by $|\cdot|$ the Euclidean norm on $\R^n$.
The transposition operation is denoted by $'$.
We write
$\O(p)$ for
the group of $p\times p$ real orthogonal matrices and $\sym$ for
the set of $n\times n$ real positive-definite symmetric matrices.

\section{Problem statement  and preliminaries}

\subsection{System stabilization}
Consider a state-affine control system of the form:
\begin{equation}\label{syst}
\left\{
\begin{aligned}
&\dot x = A(u)x + b(u)
\\
&y = C(u)x
\end{aligned}
\right.
\end{equation}
where $x\in\R^n$ is the state, $y\in\R^m$ is the measured output, $u\in \R^p$ is the control input,
and where $A(u)\in\R^{n\times n}$, $b(u)\in\R^{n}$ and $C(u)\in\R^{m\times n}$.
In this paper,
we make the following assumption.
\begin{assumption}\label{ass:pol}
    The mapping $u\mapsto(C(u), A(u))$ is polynomial
    and $u\mapsto b(u)$ is continuous.
\end{assumption}

This condition includes the case of bilinear systems, namely state-affine systems where $A$ and $b$ are affine in $u$, while $C$ is constant.
According to the Cauchy-Lipschitz theorem,
for each locally essentially bounded input
$u\in L^\infty_\mathrm{loc}(\R_+, \R^p)$
and for each initial condition $x_0\in\R^n$,
the Cauchy problem associated to \eqref{syst} admits a unique maximal solution.

In this paper, we discuss output feedback stabilization from the point of view that proper state feedback stabilization is achievable and can be exploited to obtain a separation principle. For this reason, we make the assumption that a feedback law is already given.
\begin{assumption}\label{ass:stab}
    There exists a locally Lipschitz continuous feedback law $\lambda:\R^n\to\R^p$
    such that \eqref{syst}
    in closed-loop with $u = \lambda(x)$
    is
    locally exponentially stable (LES) at the origin.
Without loss of generality, we assume that $\lambda(0) = 0$.
\end{assumption}

\subsection{Observability}\label{SS:obs}

Our goal is to address the problem of semi-global dynamic output feedback stabilization of \eqref{syst}, while avoiding any uniform observability assumption. 
In the context of state-affine systems, the usual notion of observability can be adequately replaced with the equivalent notion of positive-definiteness of the observability Gramian.
For a given locally essentially bounded input $u:\R_+\to \R^p$,
the (backward) observability Gramian over $[s, t]$ is the positive semi-definite symmetric matrix given by
\begin{equation}\label{E:def_gram}
    \Gram_u(t, s)
    = \int_{s}^{t}\Phi_u(\tau, t)'C(u(\tau))'C(u(\tau))\Phi_u(\tau, t)\dd \tau,
\end{equation}
where $\Phi_u(t, s)$ is the transition matrix solution to
\[
\frac{\partial \Phi_u}{\partial t}(t, s)
=A(u(t))\Phi_u(t, s),\quad \Phi_u(s, s) = \Id.
\]
The system is said to be \emph{observable} for the input $u$ over $[s,t]$ when $\Gram_u(t,s)$ is positive-definite 
while \emph{inobservability} corresponds to the kernel of $\Gram_u(t,s)$ being non-trivial.
This is completely natural since for any two trajectories $x,\tilde x$ of \eqref{syst}, with same input $u$ and of respective outputs $y,\tilde y$,
$$
\int_s^t|y(\tau)-\tilde y(\tau)|^2\diff \tau=(x(t)-\tilde x(t))'\Gram_u(t,s)(x(t)-\tilde x(t)).
$$
Moreover,
the observability Gramian satisfies, for $r\leq s\leq t$,
\begin{equation}
\Gram_u(t, r)
    =\Phi_u(s, t)'\Gram_u(s, r)\Phi_u(s, t) +\Gram_u(t, s).
    \label{eq:gram_chasles}
\end{equation}
As a consequence, we recover the usual implication that observability over a given interval implies observability over any encompassing interval.

Recall that in the case of linear time-invariant systems, observability is determined by Kalman's test. Therefore, for a constant input $u\in \mathbb{R}^p$, observability is equivalent to \emph{the pair $(C(u), A(u))$ being observable}, meaning full rank of the $mn \times n$ Kalman observability matrix defined as
\[
\Obs(u)
=\begin{pmatrix}
C(u) \\ C(u)A(u) \\ \vdots \\ C(u)A(u)^{n-1}
\end{pmatrix}.
\]

As mentioned in the introduction, uniform observability, meaning $\Gram_u(t, s) > 0$ for all inputs $u$, is typically
required for output feedback stabilization.
In this paper,
we relax this assumption to the following.

\smallskip
\begin{assumption}\label{ass:0obs}
    The pair $(C(0), A(0))$ is observable.
\end{assumption}

\smallskip
\textit{Problem statement.}
Under Assumptions~\ref{ass:pol}, \ref{ass:stab} and \ref{ass:0obs}, design a hybrid closed-loop output feedback stabilization scheme based on the coupling of the system with an observer. Since the observability assumptions are quite low, we rely on control templates to recover the observability required.

\subsection{Kalman-like observer}

State-affine systems have the major advantage to give access to Kalman-like observers, where innovation is weighted according to a dynamic symmetric positive-definite gain matrix adapted to time-varying linear systems.
For a given locally essentially bounded input $u$ and state trajectory $x$ of \eqref{syst} with output $y=C(u)x$,
the observer  estimate
$\xhat\in \R^n$ of $x$
follows the dynamical system with gain matrix $S>0$:
\begin{equation}\label{E:observer}
\dot \xhat
= A(u)\xhat + b(u) - S^{-1}C(u)'(C(u)\xhat-y).
\end{equation}
In this paper, we focus on Kalman-like observers where the matrix $S$ satisfies the Lyapunov matrix differential equation (i.e., with a linear
term in place of the usual quadratic one):
\begin{equation}\label{E:observerGain}
      \dot S= -A(u)'S - SA(u) - \theta S + C(u)'C(u).
\end{equation}
The constant
$\theta > 0$ is a gain parameter that we can adjust to achieve tunable speed of convergence under observability.
Although further details  are discussed in \cite{brivadis:hal-03845706},
let us restate some key facts about this observer.
Denoting by $\varepsilon=\xhat-x$ the observer error
governed by the dynamics
$
\dot \eps = \big(A(u) - S^{-1}C(u)'C(u)\big)\eps,
$
the Kalman-like observer inherently possesses its own Lyapunov function
\(
\eps'S\eps
\)
which satisfies
$(\eps'S\eps)(t) \leq \e^{-\theta(t-t_0)}(\eps'S\eps)(t_{0})$ for all $t\geq t_0$,
leading to the fundamental error bound
\begin{equation}\label{E:error_bound}
    |\eps(t)| \leq \e^{-\frac{\theta}{2}(t-t_0)} \sqrt{\frac{S_{\max}(t_{0})}{S_{\min}(t)}}|\eps(t_{0})|,
\end{equation}
with $S_{\min}$ and $S_{\max}$ respectively denoting the smallest and largest eigenvalues of the positive-definite matrix $S$.

The Lyapunov differential equation
\eqref{E:observerGain} with initial
value  condition
$S(t_0)>0$ admits
 the
explicit variation of constants expression
(see, e.g., \cite[Corollary 1.1.6]{MR1997753}).
\begin{multline} \label{E:VariationConst}
S(t)=
\e^{-\theta(t-t_0)}\Phi_u(t_0,t)'S(t_0)\Phi_u(t_0,t)
+
\int_{t_0}^{t} \e^{-\theta(t-\tau)}\Phi_u(\tau, t)'C(u(\tau))'C(u(\tau))\Phi_u(\tau, t)\diff \tau.
\end{multline}
This implies $S(t)>0$ for all $t\geq t_0$, and for any $t_1 \in[t_0, t]$,
\begin{equation}
\label{E:bound_below_S_obs}
    S_{\min}(t)\geq\e^{-\theta(t_1 -t_0)} \lambda_{\min}(\Gram_u(t_1, t_0)),
\end{equation}
where $\lambda_{\min}(\Gram_u(t_1, t_0))$ is the
the smallest eigenvalue of $\Gram_u(t_1, t_0)$.
Therefore, bounding {$S_{\min}$} from below using equation \eqref{E:bound_below_S_obs} directly translates observability into convergence of the observer.
On the other hand, inobservability becomes a critical difficulty when assessing the dynamics of $S$ that we mitigate through control templates.

\section{Output feedback with control templates}
\subsection{Control templates}
\label{sec:templates}

In \cite{andrieu:hal-04387614},
a new approach was developed to overcome observability singularities in output feedback stabilization.
The main tool introduced was the notion of control template, that we recall below in the specific case of state-affine systems.

\smallskip
\begin{definition}
An input $v\in L^\infty([0, \dt],\R^p)$ is a \emph{control template}
if for all $\lambdamax\ge0$ there exists $g(\dt)>0$
such that $\Gram_{\mu\RR v}{(\dt, 0)}\geq g(\dt)\Id$
for all
$(\mu, \RR) \in [0, \lambdamax]\times \O(p)$.
\end{definition}

\smallskip
Note that in \cite{andrieu:hal-04387614}, only analytic control templates were considered.
This had significant implications.
An analytic input $v$ that renders the system observable within a positive time $T$,
makes it observable within any positive time $\dt \le T$.
Furthermore, letting  $\lip{v}$ be the Lipschitz constant of $v$ over the interval $[0, T]$,
$|v(s)-v(0)|\leq \lip{v} \dt$, for all $s\in[0, \Delta]$.

These two advantageous properties, which were
inherently granted for free in \cite{andrieu:hal-04387614} due to analyticity, do not necessarily hold for less regular inputs.
To overcome these limitations,
we introduce the concept of control template families,
bearing in mind that if $v$ is an analytic control template as defined in \cite{andrieu:hal-04387614}, then 
$(v|_{[0, \dt]})_{\dt\in(0, T]}$
constitutes a control template family.
We recall that a function $\alpha:\R_+\to\R_+$ is of class $\KK$ if it is continuous, increasing, and $\alpha(0) = 0$.

\smallskip
\begin{definition}
\label{def:control-template-family}
Let $T>0$ and $\kappa$ be a class $\KK$ function.
A family $(v_\dt)_{\dt\in(0, T]}$ is called a \emph{sampling family} if for
all $\dt$:
\begin{itemize}
    \item $v_\dt\in L^\infty([0, \dt],\R^p)$,
    
    \item $v_\dt$ is continuous at $0$ and $v_\dt(0) = (1, 0, \dots, 0)$,
    \item $|v_{\dt}(s)-v_{\dt}(0)|\leq \kappa(\dt)$, for all $s\in[0, \dt]$.
\end{itemize}
If, in addition, $v_\dt$ is a control template for each $\dt>0$,
we say that $(v_\dt)_{\dt\in(0, T]}$ is a \emph{control template family}.
\end{definition}

\smallskip
Control template families always exist (and are generic in some sense) as a consequence of the universality theorem \cite[Theorem 7]{andrieu:hal-04387614}.
However, this theorem is purely qualitative, and its proof does not furnish any explicit construction.
In the present paper, we propose an explicit construction of such families
(made of piecewise constant inputs)
in Section~\ref{sec:control-template-construction}.

\begin{remark}
In Definition~\ref{def:control-template-family}, we could have chosen $\kappa$ to be linear. However, we have opted to maintain the class $\mathcal{K}$ assumption, which is the appropriate one for our theorems.
\end{remark}

\subsection{Output feedback design}
\label{sec:output-feedback-design}

For any $v\in\R^p$, define the nonempty set
\(
\RRR(v)
= \left\{\RR\in\O(p) \mid \RR 
\left(
|v|, 0, \cdots, 0
\right) =v\right\}.
\)
Following \cite{andrieu:hal-04387614},
we propose a dynamic output feedback design based on the following hybrid dynamics
structure,
with input $u = \mu \RR v_\dt(s)$:
\begin{equation}\label{eq:closed-loop}
\left\{
\begin{aligned}
&\left.\begin{aligned}
&\dot x = A(u)x + b(u)
\\
&\dot \xhat = A(u)\xhat + b(u) 
- S^{-1}C(u)'C(u)(\xhat - x)
\\
&\hspace{-1.5pt}\dot S = -A(u)'S - SA(u) - \theta S
+ C(u)'C(u)
\\
&\dot s = 1
,\qquad\dot \mu = 0
,\qquad\dot \RR = 0
\end{aligned}\hspace{+0.5pt}\right]
\!s\in[0, \dt],
\\
&\left.\begin{aligned}
&x^+ = x, \qquad 
\xhat^+ = \xhat, \qquad
S^+ = S
\\
&\hspace{.3mm}s^+ = 0, \quad\hspace{.4mm}
\mu^+ = |\lambda(\xhat)|, \quad\hspace{.4mm}
\RR^+ \in \RRR(\lambda(\xhat))
\end{aligned}\hspace{0.34cm}\right]
\!s = \dt.%
\end{aligned}\right.
\end{equation}

Let us briefly describe the above hybrid output feedback dynamics.
For any variable $z$ of system~$\eqref{eq:closed-loop}$, $z^+$ means the value after a jump.
The jump times are periodically triggered whenever the timer $s$ reaches $\dt$.
During each interval of length $\dt$, the control law applied to the system is $u=\mu \RR v_\dt(s)$, ensuring observability per the definition of control templates.

At each jump,
the scaling parameter $\mu$ and the isometry $\RR$ are updated such that at the beginning of each time period, $\mu\RR v_\dt(0) = \lambda(\xhat)$.
Choosing $\dt$ small enough ensures that the input remains close to $\lambda(\xhat)$,
while the scaling parameter $\mu$ guarantees that $u\to0$ when $\xhat\to0$.

Let $S_\infty$ be the unique solution of $A'(0)S + S A(0) + \theta S = C'(0)C(0)$.
The set
$\{0\}\times\{0\}\times \{S_\infty\}\times[0, \dt]\times\{0\}\times\O(p)$ is said
to be LES
with basin of attraction containing $\mathcal{B}$
if there exist $\gamma, \omega>0$ such that any solution of \eqref{eq:closed-loop}
with initial condition
$(x_0, \xhat_0, S_0, s_0, \mu_0, \RR_0)\in \mathcal{B}$
satisfies
    $\|(x,\xhat, S-S_\infty,\mu)(t)\| \leq \gamma\e^{-\omega t}\|(x_0,\xhat_0,S_0-S_\infty,\mu_0)\|$.

\smallskip
Our main theorem (proved in Section \ref{sec:proof}) is the following.

\smallskip
\begin{theorem}\label{th:main}
    Suppose Assumptions \ref{ass:stab} and \ref{ass:0obs} hold and let $(v_\dt)_{\dt\in(0, T]}$ be a control template family.
    Then,
    for any compact subset $\mathcal{L} \subset \DD\times\R^n\times \sym$,
    there exist $\dt^*, \lambdamax>0$ such that
    for all $\dt<\dt^*$, there exists $\theta^*>0$,
    such that for all $\theta>\theta^*$
    the set $\{0\}\times\{0\}\times \{S_\infty\}\times[0, \dt]\times\{0\}\times\O(p)$ is
LES for system~\eqref{eq:closed-loop}.
Moreover,
its basin of attraction contains $\mathcal{L}\times[0, \dt]\times[0, \lambdamax]\times\O(p)$.
\end{theorem}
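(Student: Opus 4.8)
The argument follows the cascade philosophy of sample-and-hold separation principles: on each interval $[t_k,t_{k+1}]$ of length $\dt$ the control template guarantees observability, so the Kalman-like observer contracts the error, while the plant, fed an input $u=\mu\RR v_\dt(\cdot)$ close to $\lambda(\xhat(t_k))$, behaves like a small perturbation of the nominal closed loop $\dot x=f_\lambda(x)$, with $f_\lambda(x):=A(\lambda(x))x+b(\lambda(x))$. I would split the statement into (i) local exponential stability of the target set $\mathcal{T}:=\{0\}\times\{0\}\times\{S_\infty\}\times[0,\dt]\times\{0\}\times\O(p)$ and (ii) the inclusion of $\mathcal{L}\times[0,\dt]\times[0,\lambdamax]\times\O(p)$ in its basin, the two being combined at the end into the uniform exponential estimate required by the definition; the observer plays the role of the driving subsystem and $(x,S,\mu)$ that of the driven one. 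Note that $S_\infty$ is positive definite by Assumption~\ref{ass:0obs}, that $b(0)=0$ because $f_\lambda(0)=0$, and that $\mathcal{T}$ is therefore invariant.

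I would first establish an observer contraction estimate that is \emph{uniform in $\theta$} at the sampling instants. With $\eps=\xhat-x$ one computes $\frac{\dd}{\dd t}(\eps'S\eps)=-\theta\,\eps'S\eps-|C(u)\eps|^2$, so $\eps'S\eps$ is non-increasing and decays at rate $\theta$. The key point is that in $|\eps(t)|^2\le(\eps'S\eps)(t)/S_{\min}(t)$ the factor $\e^{-\theta(t-t_k)}$ from the numerator is cancelled by the matching factor in the bound $S_{\min}(t)\ge\e^{-\theta(t-t_k)}\e^{-2L(t-t_k)}S_{\min}(t_k)$ which follows from \eqref{E:VariationConst}, $L$ being a bound on $\|A(u)\|$ for $|u|\le\bar u:=\lambdamax(1+\kappa(\dt))$; hence within one period $|\eps|$ grows by at most $\e^{L\dt}\sqrt{S_{\max}(t_k)/S_{\min}(t_k)}$, bounded at $k=0$ by $\e^{L\dt^*}\sup_{\mathcal{L}}\sqrt{S_{\max}(0)/S_{\min}(0)}$ independently of $\theta$ and of $\dt\le\dt^*$. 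Over two consecutive periods, combining \eqref{E:bound_below_S_obs} over the intermediate completed interval, the control template inequality $\Gram_{\mu\RR v_\dt}(\dt,0)\ge g_\dt\Id$ (valid while $\mu\le\lambdamax$), and the upper bound $S_{\max}\le\bar S$ obtained from \eqref{E:VariationConst} for $\theta>2L$, one gets $|\eps(t_{k+1})|^2\le(\bar S/g_\dt)\,\e^{-\theta\dt}\,|\eps(t_{k-1})|^2$, a genuine contraction as soon as $\theta>\dt^{-1}\log(\bar S/g_\dt)$. Together these yield $\sup_k|\eps(t_k)|\le M|\eps_0|$ with $M$ independent of $\theta$ and $\dt$, and $|\eps(t_k)|$ decaying geometrically at a rate that grows without bound as $\theta\to\infty$ — all under the standing hypotheses that $\mu(t_j)\le\lambdamax$ and $|x|$ stays bounded, which will be discharged in the last step.

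Next I would treat the plant as $\dot x=f_\lambda(x)+d(t)$, $d(t)=(A(u)-A(\lambda(x)))x+b(u)-b(\lambda(x))$. Since $u(t_k)=\lambda(\xhat(t_k))$ and $|u(t)-u(t_k)|\le|\lambda(\xhat(t_k))|\kappa(\dt)$, on a fixed compact set one has $|d(t)|\le\zeta_1(\dt)+\zeta_2(|\eps(t_k)|)$ for moduli $\zeta_1,\zeta_2$ vanishing at $0$ (the term $(A(u)-A(\lambda(x)))x$ carries a factor $|x|$, and $b(u)-b(\lambda(x))$ is controlled by uniform continuity of $b$, both arguments tending to $0$ near $\mathcal{T}$). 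Using a converse Lyapunov function $W$ for the LES equilibrium of $f_\lambda$, on a compact sublevel set $\Omega_W\subset\DD$ whose interior contains the projection $\mathcal{L}_x$ of $\mathcal{L}$ onto the state space: $\Omega_W$ is robustly forward invariant against perturbations of magnitude $\le\rho$, and near $0$ one has $\dot W\le-cW+c'\,\zeta_2(|\eps(t_k)|)^2$, so the comparison lemma turns the exponential decay of $|\eps|$ into exponential decay of $W(x)$, hence of $|x|$, once $\dt$ and the residual size of $|\eps|$ are small. Finally $\frac{\dd}{\dd t}(S-S_\infty)=-A(u)'(S-S_\infty)-(S-S_\infty)A(u)-\theta(S-S_\infty)+R(u)$ with $R(0)=0$ is exponentially contracting for $\theta>2L$ and driven by $R(u)$, which vanishes with $\mu=|\lambda(\xhat)|$; hence $S-S_\infty\to0$ exponentially and $\mu(t_k)\le\mathrm{Lip}(\lambda)(|x(t_k)|+|\eps(t_k)|)\to0$ exponentially. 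Collecting these estimates gives $\|(x,\xhat,S-S_\infty,\mu)(t)\|\le\gamma\e^{-\omega t}\|(x_0,\xhat_0,S_0-S_\infty,\mu_0)\|$ for initial data in a neighborhood of $\mathcal{T}$, i.e. part (i).

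The step I expect to be the crux is reconciling the two analyses in the semi-global regime: the observer estimate presupposes $\mu(t_j)\le\lambdamax$ and boundedness of $x$, while invariance of $\Omega_W$ presupposes that $|\eps|$ is already small — which fails at $t=0$ when $\xhat_0$ ranges over a large compact set. This is handled by a peaking-type continuation argument. Fix $\Omega_W$ and $\rho$ from the nominal system; set $\lambdamax:=1+\max\{|\lambda(z)|\mid|z|\le R(\Omega_W)+M E_0\}$, where $E_0$ bounds $|\xhat_0-x_0|$ over $\mathcal{L}$; choose $\dt^*$ so small that $\zeta_1(\dt^*)$ and the one-period displacement of $x$ are negligible relative to $\rho$ and to the margin separating $\mathcal{L}_x$ from $\partial\Omega_W$; then pick $\theta^*$ so large that the observer reaches $|\eps|\le\delta$ within a time $T_{\mathrm{cu}}(\theta)\to0$ as $\theta\to\infty$. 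On $[0,T_{\mathrm{cu}}]$ the state moves by at most $T_{\mathrm{cu}}\sup|\dot x|$, arbitrarily small for $\theta$ large, so $x$ cannot leave $\Omega_W$ — made rigorous by a first-exit-time argument — and this retroactively validates $\mu(t_j)\le\lambdamax$, the bound $|u|\le\bar u$, and hence the observer estimate on that interval. For $t\ge T_{\mathrm{cu}}$, $|\eps|\le\delta$ forces $|d|\le\rho$, so $\Omega_W$ is genuinely invariant and the cascade of the previous paragraph drives $(x,\xhat,S,\mu)$ to $\mathcal{T}$; combined with part (i), this gives the uniform exponential estimate over $\mathcal{L}\times[0,\dt]\times[0,\lambdamax]\times\O(p)$, which is part (ii). The one genuinely delicate technical point throughout is that $b$ is merely continuous, so every occurrence of $b(u)-b(\lambda(x))$ must be estimated through uniform continuity together with the vanishing of its arguments near $\mathcal{T}$, rather than by a Lipschitz bound.
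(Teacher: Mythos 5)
Your proposal is correct in substance and follows the same cascade architecture as the paper: a $\theta$-uniform observer contraction obtained by playing the $\e^{-\theta(t-t_0)}$ decay of $\eps'S\eps$ against the matching factor in the lower bound on $S_{\min}$ coming from \eqref{E:VariationConst} and the template inequality, followed by an ISS-type Lyapunov estimate for the plant driven by the sampled error $|\eps(\tau_i)|$, with $\dt^*$ chosen first to make the feedback mismatch small and $\theta^*$ chosen second to shrink the error-injection term. The one genuinely different design choice is how you enforce $\mu\le\lambdamax$ in the semi-global regime. The paper replaces $\lambda$ by a saturated feedback $\lambda\circ\sat$ with $\sat|_{\DD(R')}=\Id$ and sets $\lambdamax=\sup_{\DD(R')}|\lambda|$; this makes $\mu^+=|\lambda(\sat(\xhat))|\le\lambdamax$ hold unconditionally, so the observer estimate never depends on where $\xhat$ is, and no bootstrapping is needed. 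You instead inflate $\lambdamax$ by the worst-case error overshoot $ME_0$ and close a circular dependence (observer bound $\Rightarrow$ $\xhat$ in a ball $\Rightarrow$ $\mu\le\lambdamax$ $\Rightarrow$ observer bound) by induction over sampling periods plus a first-exit-time argument. This can be made to work because on $[0,\tau_0]$ the bound $\mu_0\le\lambdamax$ is given by hypothesis and each subsequent period only needs the bound on the previous one, but it is more delicate to write down than the saturation trick, and it forces you to track the per-period growth factor of $|\eps|$ carefully. Your explicit attention to $b$ being merely continuous (uniform continuity rather than Lipschitz bounds) is a point the paper glosses over.

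One claim in your last paragraph is wrong as stated, though not fatally: the convergence time $T_{\mathrm{cu}}(\theta)$ does \emph{not} tend to $0$ as $\theta\to\infty$. The Gramian lower bound, hence \eqref{E:bound_below_S_obs}, only becomes available after at least one full template period has elapsed, so $T_{\mathrm{cu}}\ge 2\dt$ no matter how large $\theta$ is; increasing $\theta$ accelerates the decay \emph{after} that transient but cannot shorten it. Consequently the displacement of $x$ during the transient must be controlled by the smallness of $\dt$ (times a uniform bound on $|\dot x|$ over the compact of interest), exactly as the paper does by requiring $V(x(t))\le(R+R')/2$ on $[0,3\dt]$ — not by largeness of $\theta$. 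Since you also impose that the one-period displacement be negligible when choosing $\dt^*$, your argument still closes; just delete the justification ``arbitrarily small for $\theta$ large'' and attribute the smallness of the transient excursion to $\dt^*$.
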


\smallskip
The next section is the other main contribution of this paper. By providing an algorithm for the explicit construction of control template families, it allows to apply Theorem~\ref{th:main} in practice. The section is not mandatory to understand the theorem's proof, which we postpone to Section~\ref{sec:proof}.

\section{Construction of control template families}

We build control template families focusing on piecewise constant templates. In the single-input single-output (SISO) case, a constant input is observable if $\det \Obs(u)\neq 0$ (Kalman rank condition, see Section \ref{SS:obs}).
Since $\det \Obs(u)$ is a polynomial in $u\in\R$ of some degree $d$,
for any collection of 
$N>d$ distinct values
$\{v_k\}_{0\le k<N}\subset \R$
at least one $k$ is such that  $\det \Obs(v_k)\neq 0$.
A piecewise constant function that occupies at least
$N$
distinct values over $[0,\dt]$ will retain this property when scaled. We follow this idea to create a control template.
Beyond the SISO case,  is it possible to pick a finite number
of distinct points
$\{v_k\}_{0\le k<N}\subset \R^p$
that can be rotated and scaled while never all simultaneously lying within the zero locus of any polynomial of degree $d$ or less? The answer is yes, and such points are called in \emph{$(d,p)$-general position}. We provide an algorithm for their construction and the design of a control template family from their knowledge.

\subsection{ SISO case }

Assume that $m=p=1$.
For any $\dt>0$,
define
\begin{equation}\label{E:template-families-siso}
   v_\dt(s) =1 + \frac{\dt}{N}\Big\lfloor \frac{N}{\dt} s\Big\rfloor,
\quad s\in[0, \dt],
\end{equation}
where $\lfloor \cdot \rfloor$ denotes the usual  floor function.
\begin{theorem}
\label{thm:siso-case}
    Assume that Assumption \ref{ass:0obs} holds.
    If $N > \deg \big( \det \Obs(u) \big)$,
    then, for all positive $T$, $(v_\dt)_{\dt\in(0, T]}$ defined by \eqref{E:template-families-siso}
    is a control template family.
\end{theorem}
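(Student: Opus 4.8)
The plan is to verify the three bullet conditions of Definition~\ref{def:control-template-family} for the family $(v_\dt)_{\dt\in(0,T]}$ defined in \eqref{E:template-families-siso}, and then to establish the control-template property, i.e.\ a uniform lower bound on the Gramian $\Gram_{\mu v_\dt}(\dt,0)$ over $\mu\in[0,\lambdamax]$. (In the SISO case $\O(1)=\{\pm 1\}$, and since $v_\dt\geq 1>0$, the rotation $\RR=-1$ is handled by the same argument applied to $-v_\dt$, or absorbed into the scaling; I would remark on this at the outset.) The sampling-family conditions are immediate from the explicit formula: $v_\dt$ is piecewise constant hence in $L^\infty([0,\dt],\R)$; $v_\dt(0)=1$ and $v_\dt$ is constant equal to $1$ on $[0,\dt/N)$, so it is continuous at $0$; and for all $s\in[0,\dt]$ one has $0\le v_\dt(s)-1 = \frac{\dt}{N}\lfloor\frac{N}{\dt}s\rfloor \le \frac{\dt}{N}\cdot\frac{N}{\dt}s \le \dt$, so the third bullet holds with $\kappa(\dt)=\dt$, which is class $\KK$.

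The core of the argument is showing that each $v_\dt$ is a control template. Fix $\dt>0$ and $\lambdamax\ge 0$; I must produce $g(\dt)>0$ with $\Gram_{\mu v_\dt}(\dt,0)\ge g(\dt)\Id$ for all $\mu\in[0,\lambdamax]$. The function $v_\dt$ takes the $N$ distinct constant values $v_k := 1+\frac{k}{N}\dt$, $0\le k<N$, each on a subinterval $I_k=[\frac{k}{N}\dt,\frac{k+1}{N}\dt)$ of length $\dt/N$. On $I_k$ the input $\mu v_\dt$ is the constant $\mu v_k$, so the system is linear time-invariant there with matrices $(C(\mu v_k),A(\mu v_k))$, and the restricted Gramian $\Gram_{\mu v_\dt}$ over $I_k$ (after conjugation by the transition matrix, using the Chasles-type identity \eqref{eq:gram_chasles}) is bounded below by a positive multiple of the observability Gramian of that LTI pair over an interval of length $\dt/N$ whenever that pair is observable. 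By the Kalman rank condition, observability of $(C(\mu v_k),A(\mu v_k))$ is equivalent to $\det\Obs(\mu v_k)\ne 0$. Now $w\mapsto \det\Obs(w)$ is a polynomial of degree $d<N$; for fixed $\mu$ the quantity $\det\Obs(\mu v_0),\dots,\det\Obs(\mu v_{N-1})$ cannot all vanish, since otherwise the polynomial $w\mapsto\det\Obs(w)$ would have $N>d$ distinct roots $\mu v_0,\dots,\mu v_{N-1}$ (distinct because the $v_k$ are distinct and, for $\mu>0$, scaling preserves distinctness; the case $\mu=0$ is covered directly by Assumption~\ref{ass:0obs} since $\mu v_\dt\equiv 0$). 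Hence for every $\mu\in[0,\lambdamax]$ there is an index $k(\mu)$ with $(C(\mu v_{k(\mu)}),A(\mu v_{k(\mu)}))$ observable, and therefore $\Gram_{\mu v_\dt}(\dt,0)$ is positive-definite for each such $\mu$.

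To upgrade this pointwise positive-definiteness to a \emph{uniform} bound $g(\dt)>0$ over the compact set $\mu\in[0,\lambdamax]$, I would argue by continuity and compactness. The map $\mu\mapsto\Gram_{\mu v_\dt}(\dt,0)$ is continuous (the transition matrix and the integrand in \eqref{E:def_gram} depend continuously on $\mu$, using Assumption~\ref{ass:pol} and continuity of $b$ — though $b$ does not enter the Gramian), hence $\mu\mapsto\lambda_{\min}(\Gram_{\mu v_\dt}(\dt,0))$ is continuous and, by the previous paragraph, strictly positive on the compact interval $[0,\lambdamax]$; therefore it attains a positive minimum, which I take as $g(\dt)$. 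I expect the main obstacle to be making this last step fully rigorous: one must be careful that the index $k(\mu)$ need not be locally constant, so the lower bound on $\lambda_{\min}$ at a given $\mu$ comes from possibly different subintervals for nearby $\mu$; the clean way around this is to bypass the per-interval bookkeeping entirely and invoke only the continuity of $\mu\mapsto\lambda_{\min}(\Gram_{\mu v_\dt}(\dt,0))$ together with its pointwise positivity and compactness of the parameter set, which is precisely the structure just described. A secondary technical point is the quantitative LTI estimate relating a positive-definite Kalman matrix to a lower bound on the finite-horizon observability Gramian over an interval of length $\dt/N$; this is standard (it follows from \eqref{eq:gram_chasles} and the fact that, for an observable LTI pair, the Gramian over any positive-length interval is positive-definite), so I would state it and not belabor the constants.
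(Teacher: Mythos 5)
Your proposal is correct and follows essentially the same route as the paper's proof: direct verification of the sampling-family conditions, pointwise positive-definiteness of the Gramian via the pigeonhole argument on the $N>\deg(\det\Obs)$ distinct values $\mu\RR v_k$ combined with the Kalman rank test and the Chasles identity \eqref{eq:gram_chasles}, and then continuity of the Gramian plus compactness of the parameter set $[0,\lambdamax]\times\{-1,1\}$ to get the uniform bound $g(\dt)$. Your explicit treatment of the $\mu=0$ and $\RR=-1$ cases is slightly more careful than the paper's, but the substance is identical.
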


\begin{proof}
Let $(\mu, \RR)\in [0, \bar\lambda]\times \{-1,1\}$ and set $v = \mu\RR v_\dt$.
For $0\le k \le N$, set $s_k=k\dt/N$.
By definition of $v_\dt$, $v|_{[s_k, s_{k+1})}$ is constant
and $v_\dt(s_k)\neq v_\dt(s_\ell)$ if $k\neq \ell$.
Using \eqref{eq:gram_chasles} yields, for any $k< N$,
\begin{align*}
\Gram_{v}(\dt, 0)
\ge \Phi_{v}\left( s_{k+1}, \dt \right)'\Gram_{v}\left( s_{k+1}, s_{k} \right)\Phi_{v}\left( s_{k+1}, \dt\right).
\end{align*}
Moreover, if $\det \Obs(u)$ were the zero polynomial, it would contradict Assumption \ref{ass:0obs}.
Hence, it admits at most $\deg (\det \Obs(u))$ 
distinct roots.
Thus,
$\det \Obs(v(s_k))\neq 0$ for some $k$.
Because $\Phi_{v}(s_{k+1}, \dt)$ is invertible,
\(
\Gram_{v}(\dt, 0)
\)
is positive.
Note that for all $(\mu, \RR)\in[0, \lambdamax]\times \{-1, 1\}$,
$v$ belongs to the $N$-dimensional subspace of $L^\infty([0, \dt], \R^p)$ of 
functions  that are constant over each subinterval $[s_k, s_{k+1})$.
Consequently,
\(
\Gram_{v}(\dt, 0)
\geq g(\dt)\Id
\)
with
\(
g(\dt)
= \inf_{(\mu, \RR)\in[0, \lambdamax]\times \{-1, 1\}} \lambda_{\min}(\Gram_{\mu \RR v_\dt}(\dt, 0))
\)
being positive because $[0, \lambdamax]\times \{-1, 1\}$ is compact and  
$v\mapsto \Gram_v(\dt, 0)$ is continuous on this $N$-dimensional subspace.
Finally, by construction,
$v_\dt$ is continuous at $0$, $v_\dt(0)=1$ and
\(
|v_\dt(s) -v_\dt(0)|
\le\dt
\)
for any $s\in[0, \dt]$,
which ends the proof.
\end{proof}

\smallskip
When $m,p>1$, an analogue of Theorem~\ref{thm:siso-case} still holds but the whole construction is more involved.
For $m>1$,
$\det(O(u))$ will be replaced
by the determinant of a full-rank $n\times n$ submatrix $M(u)$.
Such a submatrix exists by Assumption~\ref{ass:0obs}, and an upper bound of its degree is
\[
\deg \big( \det M(u) \big)
\le n\deg C +\frac{n(n -1)}{2}\deg A.
\]

\subsection{Construction of points in general position}
\label{sec:general-position}

In this section,
let $d, p\in\N$ and let
{$\card{\cdot}$} denote cardinality.
\begin{definition}
A subset $E\subset \R^p$ is said in $(d, p)$-general position if 
for each polynomial $h\in\R[X_1, \dots, X_p]$ of degree $d$ or less,
there exists $v\in E$ such that $h(v)\neq 0$.
\end{definition}

\smallskip
Only a finite collection of points in $\R^p$ is necessary to get the $(d, p)$-general position property.
If $p=1$, then it is necessary and sufficient to consider $d+1$ distinct points.
If not, the construction becomes more intricate (involving some algebraic considerations).
The complete answer is provided in the following theorem, inspired by the idea from
\cite{benyaacov-hal-2017}.

\smallskip
\begin{theorem}
Let $I = \{1,\ldots,p+d\}$, and assume that $a_1,\ldots,a_{p+d}$ are distinct real numbers.
Let $\Sigma$ denote the collection of all subsets of $I$ of size $p$.
For $\sigma \in \Sigma$, define a polynomial
\begin{gather*}
    f_\sigma(T) = \prod_{i \in \sigma} \, (T - a_i) = T^p + v_{\sigma,1} T^{p-1} + \cdots + v_{\sigma,p}.
\end{gather*}
Let $v_\sigma = (v_{\sigma,1},\ldots,v_{\sigma,p}) \in \R^p$ be the vector of coefficients.
Then $E = \{ v_\sigma : \sigma \in \Sigma \}$
is in $(d, p)$-general position, and $E$ is minimal such.
\end{theorem}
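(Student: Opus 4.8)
The plan is to prove the two assertions separately: first that $E=\{v_\sigma:\sigma\in\Sigma\}$ is in $(d,p)$-general position, and then that no set of fewer than $\card{\Sigma}$ points can be. For the first part, suppose for contradiction that some nonzero polynomial $h\in\R[X_1,\dots,X_p]$ of degree $\le d$ vanishes on all of $E$. The key observation is that the coefficient vector $v_\sigma$ encodes, up to sign, the elementary symmetric functions of $\{a_i:i\in\sigma\}$: indeed $v_{\sigma,k}=(-1)^k e_k(a_i:i\in\sigma)$. The natural move is to consider the single-variable polynomial
\[
H(T)=h\bigl(-e_1(\,\cdot\,),\;e_2(\,\cdot\,),\;\dots,\;(-1)^p e_p(\,\cdot\,)\bigr)
\]
but more precisely, following \cite{benyaacov-hal-2017}, I would pass to the ``generic'' situation. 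Adjoin indeterminates: work in $\R[t_1,\dots,t_{p+d}]$ and for a $p$-subset $\sigma$ let $w_\sigma$ be the vector of coefficients of $\prod_{i\in\sigma}(T-t_i)$. Then $h(w_\sigma)$ is a polynomial in the $t_i$'s that is invariant under permutations of $\{t_i:i\in\sigma\}$. The assertion that $h(v_\sigma)=0$ for our chosen distinct reals $a_1,\dots,a_{p+d}$ should, after the right specialization/interpolation argument, force $h(w_\sigma)$ to vanish identically in the $t_i$; from there one derives $h\equiv 0$, the contradiction. The clean way to run this is to observe that a polynomial in $X_1,\dots,X_p$ of degree $\le d$ composed with the coefficient map $(t_1,\dots,t_p)\mapsto$ (coefficients of $\prod(T-t_i)$) yields a symmetric polynomial of degree $\le pd$ in the $t_i$; and then to show that $p+d$ distinct evaluation nodes, chosen $p$ at a time, suffice to pin down every such symmetric polynomial — this is exactly a multivariate interpolation / unisolvence statement, and it is where the hypothesis $\card I=p+d$ (rather than something smaller) gets used.

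For the minimality, I would argue that if $E'\subset\R^p$ has $\card{E'}<\card\Sigma=\binom{p+d}{p}$, then there is a nonzero polynomial of degree $\le d$ vanishing on $E'$. This is a dimension count: the space of polynomials in $p$ variables of degree $\le d$ has dimension $\binom{p+d}{p}$, and vanishing at a point is one linear condition, so $\binom{p+d}{p}-1$ points impose at most $\binom{p+d}{p}-1$ conditions on a $\binom{p+d}{p}$-dimensional space, leaving a nonzero solution. Hence $E'$ is not in $(d,p)$-general position. (One should phrase ``minimal'' carefully: the theorem presumably means minimal cardinality, and this count handles it; if it means minimal under inclusion, then additionally one notes that $E$ itself is ``tight'' in that it has exactly the threshold cardinality, but the cardinality statement is the substantive one.)

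The main obstacle, I expect, is the forward direction — specifically, upgrading ``$h$ vanishes on the finitely many points $v_\sigma$ built from the particular reals $a_i$'' to ``$h$ vanishes as a polynomial.'' Vanishing on $\binom{p+d}{p}$ scattered points of $\R^p$ does \emph{not} in general force a degree-$\le d$ polynomial to be zero (that many generic points do impose independent conditions, but these points are very special — they lie on the image of the coefficient map). So the special algebraic structure of the $v_\sigma$ must be exploited, and that is precisely the content imported from \cite{benyaacov-hal-2017}: the relevant statement is that these coefficient-vectors, for any $p+d$ distinct nodes, are ``generic enough'' for degree $d$ — equivalently that the only degree-$\le d$ form vanishing on all of them is zero. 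I would isolate this as the crux lemma, prove it via the symmetric-function interpolation argument sketched above (expressing everything in terms of the $a_i$ and using that a symmetric polynomial of controlled degree is determined by its values on $p$-subsets of a node set of size $p+d$), and then assemble the theorem from the crux lemma plus the elementary dimension count for minimality.
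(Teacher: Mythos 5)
Your minimality argument is fine and is essentially the paper's (deleting points drops the rank of the evaluation matrix below $\binom{p+d}{p} = \dim$ of the space of degree-$\le d$ polynomials, so a nonzero polynomial vanishes on the smaller set). But the forward direction --- the actual content of the theorem --- is not proved. You correctly identify the crux (``upgrade vanishing at the special points $v_\sigma$ to $h \equiv 0$'') and then defer it to a ``symmetric-function interpolation / unisolvence statement'' that you do not establish. Worse, the version you state is false: composing $h$ with the coefficient map gives a symmetric polynomial of degree $\le pd$ in $t_1,\dots,t_p$, but such polynomials form a space of dimension strictly larger than $\binom{p+d}{p}$ (already for $p=2$, $d=1$: dimension $4$ versus $3$ evaluation points), so they are \emph{not} determined by their values on the $p$-subsets of a $(p+d)$-node set. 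The correct statement requires restricting to polynomials of degree $\le d$ \emph{in the elementary symmetric functions} (each $e_k$ counted as degree one), and that restricted unisolvence claim is precisely the theorem you are trying to prove --- so the reduction is circular, and the crux lemma remains unaddressed.

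What the paper actually does is exhibit an explicit dual basis. For each $i$ set $\omega_i(X) = a_i^p + a_i^{p-1}X_1 + \cdots + X_p$, so that $\omega_i(v_\sigma) = f_\sigma(a_i)$, and for each $\sigma$ define the degree-$d$ polynomial $g_\sigma = \prod_{i \in I\setminus\sigma}\omega_i$. Then $g_\sigma(v_\tau) = \prod_{i \in I\setminus\sigma}\prod_{j\in\tau}(a_j - a_i)$, which is nonzero iff $\tau \subseteq \sigma$ iff $\sigma = \tau$ (equal cardinalities). Hence the matrix $\bigl(g_\sigma(v_\tau)\bigr)_{\sigma,\tau}$ is diagonal and invertible; since it factors as $W'V$ where $V = (v_\sigma^\xi)$ is the square evaluation matrix of the monomials of degree $\le d$ at the points of $E$, the matrix $V$ is invertible, and no nonzero $h$ of degree $\le d$ can kill all of $E$. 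This biorthogonality computation is the missing idea; without it (or an equivalent), your proof does not go through.
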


\begin{proof}
\newcommand{\form}{\omega}
Let $X = (X_1,\ldots,X_p)$ denote an indeterminate point in $\R^p$.
For $1 \leq i \leq p+d$, define an affine function $\form_i\colon \R^p \rightarrow \R$ by
\(
    \form_i(X) = a_i^p + a_i^{p-1} X_1 + \cdots + X_p.
\)
Then
\begin{gather*}
    \form_i(v_\sigma) = a_i^p + a_i^{p-1} v_{\sigma,1} + \cdots + v_{\sigma,p} = f_\sigma(a_i).
\end{gather*}
Let $\Xi = \{\xi \in \N^p : \xi_1 + \cdots + \xi_p \leq d\}$.
The monomials in $X$ of degree at most $d$ can be enumerated as
$X^\xi = X_1^{\xi_1}\cdots X_p^{\xi_p}$, for $\xi \in \Xi$,
and a polynomial function $h \colon \R^p \rightarrow \R$ of degree at most $d$ can be written as $\sum_{\xi \in \Xi} h_\xi X^\xi$.
If $\sigma \in \Sigma$, then $\card{I \setminus \sigma} = d$, and we may define such a polynomial function
\begin{gather*}
    g_\sigma(X) = \prod_{i \in I \setminus \sigma} \form_i(X) = \sum_{\xi \in \Xi} w_{\sigma,\xi} X^\xi.
\end{gather*}

Let us define two matrices $V = (v_\sigma^\xi)$ and $W = (w_{\sigma,\xi})$, whose rows are indexed by $\xi \in \Xi$ and columns by $\sigma \in \Sigma$.
Let $D = W' V = (d_{\sigma,\tau})$, where $\sigma,\tau \in \Sigma$.
Then
\begin{align*}
    d_{\sigma,\tau}
    &
    = \sum_{\xi \in \Xi} w_{\sigma,\xi} v_\tau^\xi
    = g_\sigma(v_\tau)
    = \prod_{i \in I \setminus \sigma} \form_i(v_\tau)
    \\ &
    = \prod_{i \in I \setminus \sigma} f_\tau(a_i)
    = \prod_{i \in I \setminus \sigma} \prod_{j \in \tau} \, (a_j - a_i).
\end{align*}
Therefore, $d_{\sigma,\tau} \neq 0$
if and only if $\tau \cap (I \setminus \sigma) = \emptyset $, that is $\tau \subseteq \sigma$.
Since $\tau$ and $\sigma$ have same cardinality $p$, $d_{\sigma,\tau} \neq 0$ if and only if $\sigma = \tau$.
Thus, $D$ is diagonal and $\det D \neq 0$.

It is a standard combinatorial fact that $\card{\Xi} = \binom{p+d}{p}$, so $V$ and $W$ are square matrices, and since $\det D \neq 0$, they are invertible.
Let $h = \sum_{\xi \in \Xi} h_\xi X^\xi$ be a non-zero polynomial of degree at most $d$,
and let $H = (h_\xi)_{\xi \in \Xi}$ be the row matrix of its coefficients.
Since $V$ is invertible,
$H V = \bigl( h(v_\sigma) \bigr)_{\sigma \in \Sigma}$
is non-zero, so $h(v_\sigma) \neq 0$ for some $v_\sigma \in E$.
  
Replacing $E$ with a proper sub-family $E_1$ would replace $V$ with a matrix $V_1$ of rank strictly less than $ \binom{p+d}{p}$.
Any non-zero polynomial whose coefficient vector is in the kernel of $V_1$ would vanish on all points of $E_1$, whence minimality.
\end{proof}

\subsection{Control template family from points in general position}
\label{sec:control-template-construction}

From now on we set
$d =n\deg C + \big(n(n -1)\deg A \big) /2$.
We produce a control sampling family
$(v_\dt)_{\dt\in(0, T]}$
in the following manner.
Let $\{v_{k}\}_{0\le k<N}\subset\R^p$ be in $(d, p)$-general position
(implying $N \ge\binom{p+d}{p}$).
Because being in $(d, p)$-general position is invariant under affine transformations
(as the reader may check),
we assume without loss of generality that $v_0=(1,0,\dots,0)$.
For any $\dt>0$, any $0\le k <N$,
set
\begin{equation}
\label{E:v_Delta-mimo}
v_{\dt}(s)
= v_0 +\dt(v_k -v_0),
\quad s\in\bigg[ \frac{k\dt}{N}, \frac{(k+1)\dt}{N}\bigg).
\end{equation}
Because the \(v_i\)'s are in general position, they do not lie in the zero locus of any minor determinant of the observation matrix.
Therefore,
we have the following theorem, the proof of which proceeds just like that of Theorem~\ref{thm:siso-case}.

\smallskip
\begin{theorem}
\label{thm:mimo-case}
Under Assumption~\ref{ass:0obs},
if $N\ge \binom{p+d}{p}$,
then $(v_\dt)_{\dt\in(0, T]}$ defined by \eqref{E:v_Delta-mimo} is a control template family.
\end{theorem}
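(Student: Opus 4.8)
The plan is to mirror the proof of Theorem~\ref{thm:siso-case}, replacing the single determinant $\det \Obs(u)$ by the family of $n\times n$ minor determinants of the observation matrix $\Obs(u)$, and replacing the ``$N>d$ distinct scalars'' argument by the $(d,p)$-general position property of $\{v_k\}_{0\le k<N}$. Fix $\lambdamax\ge 0$ and $(\mu,\RR)\in[0,\lambdamax]\times\O(p)$, and set $v=\mu\RR v_\dt$. By \eqref{E:v_Delta-mimo}, $v$ is piecewise constant on the subintervals $[s_k,s_{k+1})$ with $s_k=k\dt/N$, taking the value $w_k:=\mu\RR(v_0+\dt(v_k-v_0))$ on the $k$-th piece.

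First I would reduce positivity of $\Gram_v(\dt,0)$ to observability of one of the constant pairs $(C(w_k),A(w_k))$. Using \eqref{eq:gram_chasles} as in the SISO proof, $\Gram_v(\dt,0)\ge \Phi_v(s_{k+1},\dt)'\Gram_v(s_{k+1},s_k)\Phi_v(s_{k+1},\dt)$ for each $k<N$, and since $\Phi_v(s_{k+1},\dt)$ is invertible, it suffices to find one $k$ with $\Gram_v(s_{k+1},s_k)>0$, i.e.\ (by Kalman's test on the constant-input interval) with $\Obs(w_k)$ of full rank $n$. Full rank of $\Obs(w_k)$ is equivalent to the non-vanishing of \emph{at least one} of the $n\times n$ minor determinants $M_j(w_k)$, $j=1,\dots,\binom{mn}{n}$. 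Each $M_j$ is a polynomial in $u\in\R^p$ of degree at most $n\deg C+\tfrac{n(n-1)}{2}\deg A=d$ (same bound as stated after Theorem~\ref{thm:siso-case}). Moreover, by Assumption~\ref{ass:0obs} the pair $(C(0),A(0))$ is observable, so some minor $M_{j_0}$ does not vanish at $u=0$; hence $M_{j_0}$ is not the zero polynomial, and neither is the single polynomial $P=\sum_j M_j^2$ (a finite sum of squares of polynomials, of degree at most $2d$). Then I would argue: the points $w_k$, $0\le k<N$, are obtained from the $(d,p)$-general position set $\{v_k\}$ by the affine map $u\mapsto \mu\RR(v_0+\dt(u-v_0))$, and since $(d,p)$-general position is invariant under affine transformations (noted in the paper just before the theorem), $\{w_k\}$ is again in $(d,p)$-general position --- \emph{provided} the affine map is invertible, i.e.\ $\mu>0$ (the case $\mu=0$ is handled separately below). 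Applying the general-position property to the polynomial $P$ (or directly to $M_{j_0}$, whichever is cleaner) yields some $k$ with $P(w_k)\neq 0$, hence some minor non-vanishing at $w_k$, hence $\Obs(w_k)$ full rank, hence $\Gram_v(\dt,0)>0$.

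Next I would upgrade strict positivity to a uniform lower bound $\Gram_v(\dt,0)\ge g(\dt)\Id$. Exactly as in Theorem~\ref{thm:siso-case}: for every $(\mu,\RR)\in[0,\lambdamax]\times\O(p)$ the input $v=\mu\RR v_\dt$ lies in the fixed finite-dimensional space of $\R^p$-valued functions that are constant on each $[s_k,s_{k+1})$, the map $v\mapsto\Gram_v(\dt,0)$ is continuous there, the parameter set $[0,\lambdamax]\times\O(p)$ is compact, and $\lambda_{\min}(\Gram_{\mu\RR v_\dt}(\dt,0))>0$ for every parameter value --- including $\mu=0$, where $v\equiv 0$ and $\Gram_0(\dt,0)=\int_0^\dt C(0)'C(0)\,\dd\tau=\dt\,C(0)'C(0)$; but wait, this is only positive semidefinite in general, so I must be careful: at $\mu=0$ the matrix $A(0),C(0)$ are still the constant pair, and $\Gram_0(\dt,0)>0$ by Assumption~\ref{ass:0obs} (Kalman's test again, since on $[0,\dt]$ the constant input $0$ gives the observable pair $(C(0),A(0))$). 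Hence $g(\dt):=\inf_{(\mu,\RR)}\lambda_{\min}(\Gram_{\mu\RR v_\dt}(\dt,0))$ is attained and strictly positive, giving the control template property for each $\dt$. Finally, the sampling-family conditions are immediate from \eqref{E:v_Delta-mimo}: $v_\dt$ is continuous at $0$ (constant equal to $v_0$ on $[0,\dt/N)$) with $v_\dt(0)=v_0=(1,0,\dots,0)$, and $|v_\dt(s)-v_\dt(0)|=\dt\,|v_k-v_0|\le\dt\max_k|v_k-v_0|=:\kappa(\dt)$, a class $\KK$ function (linear, hence admissible).

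The main obstacle I anticipate is the degenerate parameter $\mu=0$, where the affine map collapses and the general-position argument does not apply: there one cannot conclude observability from the points $\{v_k\}$ but must instead invoke Assumption~\ref{ass:0obs} directly for the constant input $0$. A secondary point requiring care is making the degree bound $d$ on the minors precise and confirming it is uniform over all $\binom{mn}{n}$ minors (so that a single $N\ge\binom{p+d}{p}$ suffices), and checking that affine invariance of $(d,p)$-general position is used only for invertible affine maps --- which is exactly why $\mu=0$ is special. Neither is deep, but both should be stated rather than left to the reader, since the paper asserts the proof ``proceeds just like'' the SISO case while these are precisely the places where it does not.
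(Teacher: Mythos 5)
Your proof is correct and takes essentially the route the paper intends: the paper only states that the proof ``proceeds just like'' the SISO case, the key ingredients being a single full-rank $n\times n$ submatrix $M(u)$ of $\Obs(u)$ whose determinant has degree at most $d$ (guaranteed by Assumption~\ref{ass:0obs} via the observability indices), the affine invariance of $(d,p)$-general position, and the finite-dimensional compactness argument --- and your separate treatment of the degenerate case $\mu=0$ via Assumption~\ref{ass:0obs} is the right fix for a point the paper (even in its SISO proof) glosses over. One caution: keep the option of applying general position directly to the single minor $M_{j_0}$ rather than to $P=\sum_j M_j^2$, since $\deg P$ can reach $2d$; relatedly, the degree bound $d$ is \emph{not} uniform over all $\binom{mn}{n}$ minors (a minor taking several rows from the block $CA^{n-1}$ can exceed it), but it does hold for the particular minor selected through the observability indices at $u=0$, which is the only one your argument needs.
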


\section{Separation principle}\label{sec:proof}

This section is dedicated to the proof of Theorem~\ref{th:main},
following the guidelines of \cite{andrieu:hal-04387614},
where control templates were introduced.
For this reason,
we opt for brevity in computational details.
The proof is organized in three main steps, after some necessary preliminaries. These steps are classical machinery: 
exponential stability of the observer,
completeness of trajectories, and finally closed-loop stabilization.

{\sc Step~0}: \textit{Preliminaries.}
Let us briefly recall the notion of hybrid solutions of \eqref{eq:closed-loop} in the framework of \cite{goedel2012hybrid}.
Clearly, \eqref{eq:closed-loop} satisfies the hybrid basic conditions \cite[Assumption 6.5]{goedel2012hybrid}.
The jump times $(\tau_i)_{i\in\N}$ of \eqref{eq:closed-loop},
determined by the autonomous hybrid subdynamics of the state $s$,
are explicitly given by $\tau_i = \dt-s(0)+i\dt$ for $i\in\N$.
Therefore,
any solution to \eqref{eq:closed-loop}
is defined on a hybrid time domain $E\subset\R_+\times\N$
of the form
$E = \cup_{i=-1}^{I-1}[0, T_e)\cap[\tau_i, \tau_{i+1}]\times \{i\}$
(with $\tau_{-1}\defeq 0$)
where either $T_e =I =+\infty$ (complete trajectory) or
$I\in\N$, and $\tau_{I-1} < T_e \leq \tau_{I}$ (non-complete trajectory).
Because
for any \( (\tau_{i}, i)\in E\),
\(z^+(\tau_{i}, i)=\lim_{t\downarrow \tau_i}z(t, i)\),
where \( z \) denotes any variable of system~$\eqref{eq:closed-loop}$,
we can use \( t \) as a shorthand notation for any \( (t,i) \) without confusion.
Due to the Cauchy-Lipschitz theorem, the Cauchy problem associated to \eqref{eq:closed-loop} admits a unique maximal solution, which is complete if it remains bounded.

For any $u\in\R^p$, put $f(x, u)= A(u)x+b(u)$.
Let $\mathcal{D}$ be the basin of attraction of the origin for the vector field $f(\cdot, \lambda(\cdot))$.
Since we aim to prove semi-global stabilization within $\mathcal{D}$, we select an arbitrary compact subset (for state initial conditions)
$\KK \subset \mathcal{D}$
and design a hybrid dynamic output feedback with basin of attraction containing \(\mathcal{K}\).
According to
~\cite[Théorème 2.348 and Remarque 2.350]{PralyBresch2022a},
there exists a proper function $V\in C^\infty(\mathcal{D}, \R_+)$
such that for any compact $L\subset\DD$,
three positive constants $(\const_i)_{1\leq i\leq 3}$
exist, satisfying
\begin{align}\label{E:lyap-3}
&
\frac{\partial V}{\partial x}(\xi)f\big(\xi, \lambda(\xi) \big)
\leq -\const_1 |\xi|^2,
\\
&
|\xi|^2
\leq V(\xi)\leq \const_2 |\xi|^2,
\qquad
\left|\frac{\partial V}{\partial x}(\xi)\right| 
\leq \const_3 |\xi|,
\nonumber
\end{align}
for all $\xi\in L$.
For any $R\geq 0$,
$\mathcal{D}(R)\defeq\{x\in\R^n\mid V(x)\leq R\}$
is compact because $V$ is proper,
and $\DD(R)\subset \mathring{\DD}(R')$ as soon as $R'>R$.
Now, let $R>0$ be such that $\KK\subset \DD(R)$, and choose $L=\DD(R')$ with $R'>R$,
which fixes the $\const_i$s.
Let $\sat\in C^\infty(\R^n, \R^n)$ be of compact support
and such that
$\sat|_{\DD(R')} =\Id$.
Let $(v_\Delta)_{\Delta\in(0, T]}$ be a control template family 
(such a family exists according to Theorem~\ref{thm:mimo-case}).
From now on,
$\KK$, $R$, $R'$, $\sat$ and$(v_\dt)_{\dt\in(0, T]}$ are fixed.
Set $\lambdamax=\sup_{\DD(R')}|\lambda|$.
Let
$\lip{\lambda_{\sat}}$
and $\lip{f}$
denote the Lipschitz constants of $\lambda\circ\sat$
and $f$ over the compacts of interest, respectively.

We work with the system resulting from substituting the feedback $\lambda$ by the saturated feedback
$\lambda\circ\sat$ to system~\eqref{eq:closed-loop}.
To lighten the presentation we do not explicitly write the $\sat$ function, but all our computations take it into account.

Let $\mathcal{L} \subset \DD\times\R^n\times \sym$ be a compact set whose projection onto $\DD$ is contained in $\KK$
(since $\mathcal{K}$ is arbitrary, $\mathcal{L}$ is as well).
Also, let
\(
(x_0, \xhat_0, S_0, s_0, \mu_0, \RR_0)
\in \mathcal{L}\times [0, \dt] \times[0, \lambdamax]\times\O(p)
\)
and $(x,\xhat, S,s,\mu,\RR)$
be the corresponding maximal trajectory of \eqref{eq:closed-loop}
and denote by $T_e$ its time of existence.
Recall that $\eps = \xhat-x$ denotes the observation error.

{\sc Step~1}: {\it Exponential  stability of the observation error.}
Inequality \eqref{E:bound_below_S_obs} yields, for all $t\in [2\dt, T_e)$, the lower bound
\begin{equation}\label{E:Smin-lower-bound}
S_{\min}(t)\geq \e^{-\theta \dt}g(\dt),
\end{equation}
since $v_\dt$ is a control template.
Then, \eqref{E:error_bound} and \eqref{E:Smin-lower-bound}
imply, for all $t\in [2\dt, T_e)$,
\[
|\varepsilon(t)|\leq \e^{-\frac{\theta}{2}(t -2\dt)}\sqrt{S_{\max}(\tau_0)/g(\dt)}|\eps(\tau_0)|,
\]
which yields
the exponential convergence of $\eps$ towards $0$ with a rate that can be tuned with $\theta$ from time $2\dt$.
Finally, exploiting
the first term of the right handside of 
\eqref{E:VariationConst},
we obtain
that there exists $\gamma(\dt)>0$ independent of the initial conditions such that for all $t\in[0, T_e)$:
\begin{equation}
\label{NEQ:maj-error}
|\eps(t)|
\leq \sqrt{\gamma(\dt)}
\min\big\{\e^{-\frac{\theta}{2}(t -2\dt)}, 1\big\}
|\eps_0|.
\end{equation}

{\sc Step~2}:
{\it The trajectories of system~\eqref{eq:closed-loop} are complete.}
According to \eqref{NEQ:maj-error}, $\err$ is bounded over $[0, T_e)$.
For almost all $t\in[0, T_e)$, we have
(using \eqref{E:lyap-3}):
\begin{align}
\frac{\dd }{\dd t}V(x) +{\const_1}|x|^2
&\le\parfrac{V}{x}(x)
\Big|
f\big( x, {\mu}{\RR} v_\dt(s) \big) -f\big( x, \lambda(x) \big)
\Big|
\nonumber
\\
&\le\const_4 %
    |x| \big| {\mu}{\RR} v_\dt(s)-\lambda(x) \big|,
\nonumber
\end{align}
where $\const_4$ is some constant independent of $\dt$.
According to \cite[Lemma 13]{andrieu:hal-04387614} 
(see also \cite[Appendix D]{lin2019nonlinear} for a result relying on similar computations),
there exists a class $\mathcal{K}$ function $\alpha$ and $\const_5\ge 0$ such that
\(
\big| {\mu}{\RR} v_\dt(s(t)) -\lambda(x(t)) \big|
\leq \alpha(\dt)|x(t)| +\const_5|\err(\tau_i)|
\)
for all $i\ge 0$,
$t \in [\tau_i, \tau_{i+1})$.
It follows, also employing Young's inequality, that, for all $\delta>0$,
\begin{align}
\frac{\dd }{\dd t}V(x) +{\const_1}|x|^2
\le
    \const_4 \big( \alpha(\dt) +\delta\const_5  \big) |x|^2
    +\frac{\const_4\const_5}{\delta} |\err(\tau_i)|^2.
    \label{INEQ:boundedness}
\end{align}
Choose $\delta$ and $\dt^*$ such that
\(
\const_4\big(\alpha(\dt^*) +\delta\const_5  \big) \le{\const_1}.
\)
Then,
taking into account
{inequalities \eqref{E:lyap-3} and \eqref{NEQ:maj-error}},
inequality \eqref{INEQ:boundedness} yields for all $\dt<\dt^*$
{and all $t\ge 3\dt$}
\begin{equation}
\label{NEQ:for-Gronwall-2}
\frac{\dd}{\dd t}V\big( x(t) \big)
\le -\omega V\big (x(t) \big) +\beta \e^{-\theta(t -3\dt)}|\eps_0|^2,
\end{equation}
where we have set
\(
\omega
= \left[{\const_1} -\const_4\big(\alpha(\dt) +\delta\const_5  \big)\right]/\const_2
\)
and
\(
\beta
=
{\gamma(\dt)\const_4\const_5}/\delta.
\)
Inequality \eqref{NEQ:for-Gronwall-2}
implies that
$x$ is bounded,
and because $\err$ were, so is $\xhat$.
Obviously, $s$, $\mu$ and $\RR$ remain bounded, and since $S$ is defined as long as the other coordinates are
(see formula~\eqref{E:VariationConst}), the trajectory is complete (i.e., $T_e=+\infty$).

\smallskip
{\sc Step 3}:
{\it Stabilization of the closed-loop.}
Assume, without loss of generality, that \(\dt^*\) was chosen small enough in Step~2 so that for all initial conditions,
\(
V\big( x(t) \big)
\le {(R+R')}/{2}
\)
for all {$t\in[0, 3\dt]$}.
Applying Grönwall inequality to \eqref{NEQ:for-Gronwall-2} yields,
for all $t\ge 3\dt$ and all $\theta>\omega$,
\begin{align}
V\big( x(t) \big)
&\le \e^{-\omega(t -3\dt)} V\big( x(3\dt) \big)
    +\beta
    \frac{\e^{-\omega(t-3\dt)} -\e^{-\theta(t -3\dt)}}{\theta -\omega}|\eps_0|^2
\label{NEQ:maj-V(t)<R'}
\\
& \le \frac{R +R'}{2} +\frac{\eta}{\theta -\omega},
\nonumber
\end{align}
where $\eta= 2\beta\sup_{(x_0, \xhat_0, S_0)\in\mathcal{L}}|\eps_0|^2$ (recall that $\eps_0=\xhat_0-x_0$).
Let $\theta^*$ be such that
\(
{(R +R')}/{2} +{\eta}/{(\theta^* -\omega)}
\le R'.
\)
If $\theta > \theta^*$, then
$V(x(t))\le R'$ for all $t\ge0$.
Therefore,
the exponential stability of $x$ towards zero follows from \eqref{NEQ:maj-error} and \eqref{NEQ:maj-V(t)<R'}, and the Lipschitz continuity of the flow over $[0, 3\dt]$.
Then, the exponential stability of $S$ towards $S_\infty$ follows, as usual, by choosing
$\theta^*\geq2\sup_{|u|\leq \bar\lambda}\|A(u)\|$
(see e.g. \cite{MR1997753}), and the exponential stability of $\mu$ is obtained by the exponential stability of $x$ and the Lipschitz continuity of $\lambda$.

\section{Numerical simulations}

We propose a numerical implementation of the control template strategy discussed in the paper. We focus on an academic example exhibited in \cite{4753687}. The system is a 2-input, 1-output, 3-dimensional state system, with
\begin{align*}
&A(u_1,u_2)=
\begin{bmatrix}
-0.5-u_1 & 1.5+u_2 & 4 \\
 4.3 & 6 & 5 \\
 3.2 & 6.8 & 7.2 \\
\end{bmatrix},
\\[5pt]
&
b(u_1,u_2)'=
u_1
\begin{bmatrix}
    -0.7&0&0.8
\end{bmatrix}
-u_2
\begin{bmatrix}
    1.3& 4.3& 1.5
\end{bmatrix},
\end{align*}
$$C=\begin{bmatrix}
    1&-0.5&0.5
\end{bmatrix},$$
and the feedback law 
$$
\lambda(x_1,x_2,x_3)
=
\begin{bmatrix}
    0.038 x_1 + 0.1751 x_2 - 0.8551 x_3
    \\
    3.8514 x_1 + 3.84 x_2 + 9.551 x_3
\end{bmatrix}.
$$
Assumptions \ref{ass:pol}, \ref{ass:stab} and \ref{ass:0obs} are satisfied.
Computing the Kalman test yields
$\det(\Obs(u))=(-11.12 - 1.61 u_1 +u_2) (8.84 + 0.16 u_1 + u_2)$.
As such, the singular set $\det(\Obs(u))=0$ is the union of two lines in $\R^2$. Rather than picking points in general position, we can pick a bespoke configuration that is adapted to the given system. Indeed, since the two lines are not perpendicular, no square can have its 4 vertices lying in the singular set. Any piecewise constant control taking its values at the four vertices
of a square cannot be unobservable.
For this reason,
letting $w^1_\dt$ and $w_\dt^2$
be the two indicator functions of $[\dt/2,\dt]$ and $[\dt/4,3\dt/4]$ respectively, the family of inputs $v_\dt=(1,0)+\dt(w^1_\dt,w^2_\dt)$ is a control template family.
A realization of the strategy in this precise case
is shown
in Figure~\ref{fig:simu}, with constants $\theta=50$ and $\dt=0.02$; and initial conditions $x(0)=(2, -2, 3)$,
$\xhat(0)=(-3, 2, -2)$
and $S(0)=\mathrm{Id}$.
\begin{figure}
    \centering
    \includegraphics[width = .95\linewidth]{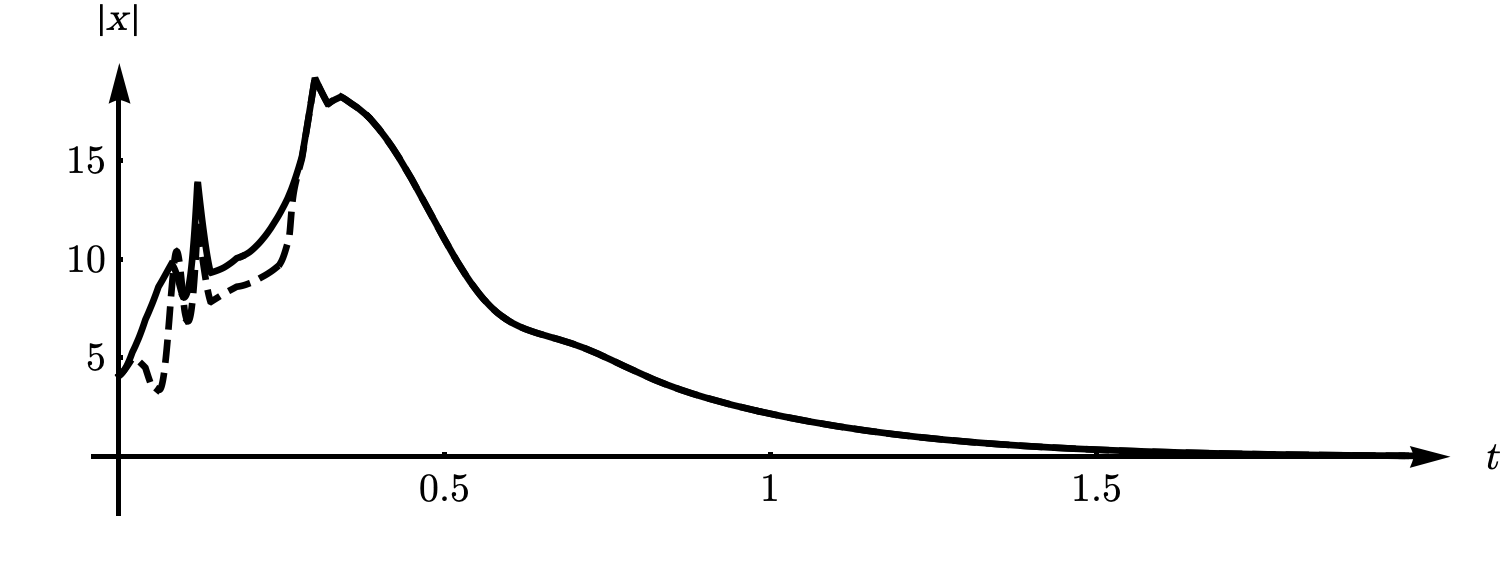}
    \smallskip
    \includegraphics[width = .95\linewidth]{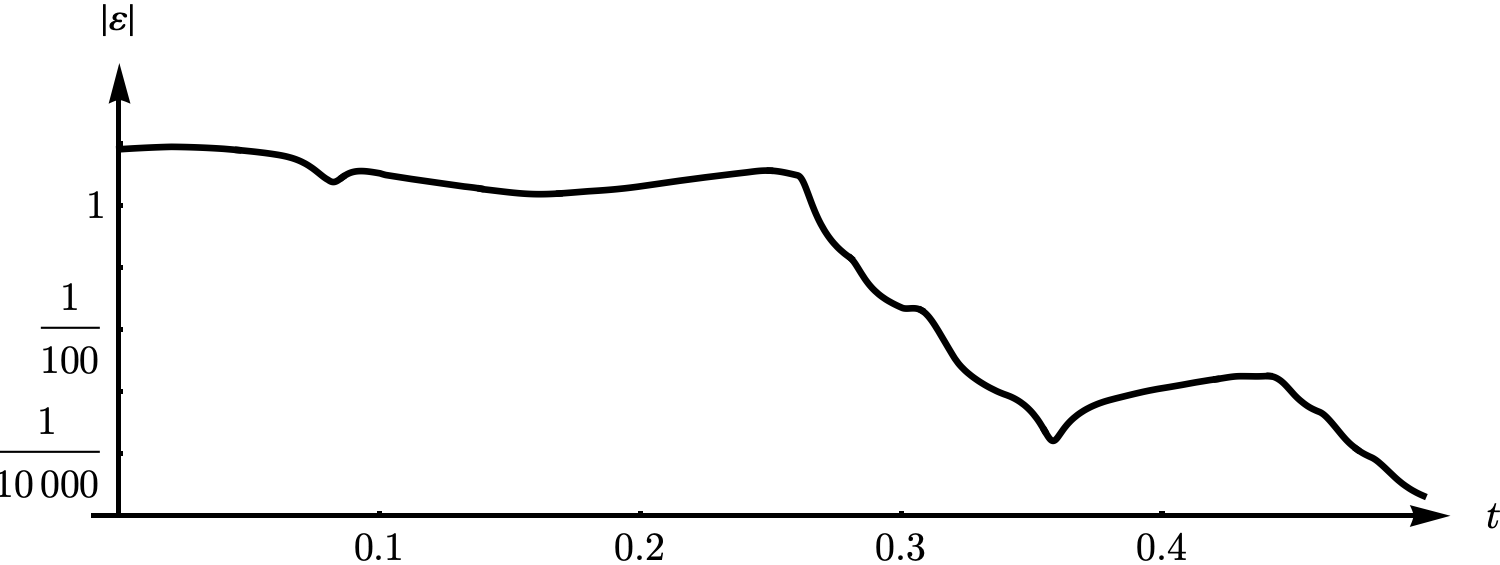}
    \smallskip
    \includegraphics[width = .95\linewidth]{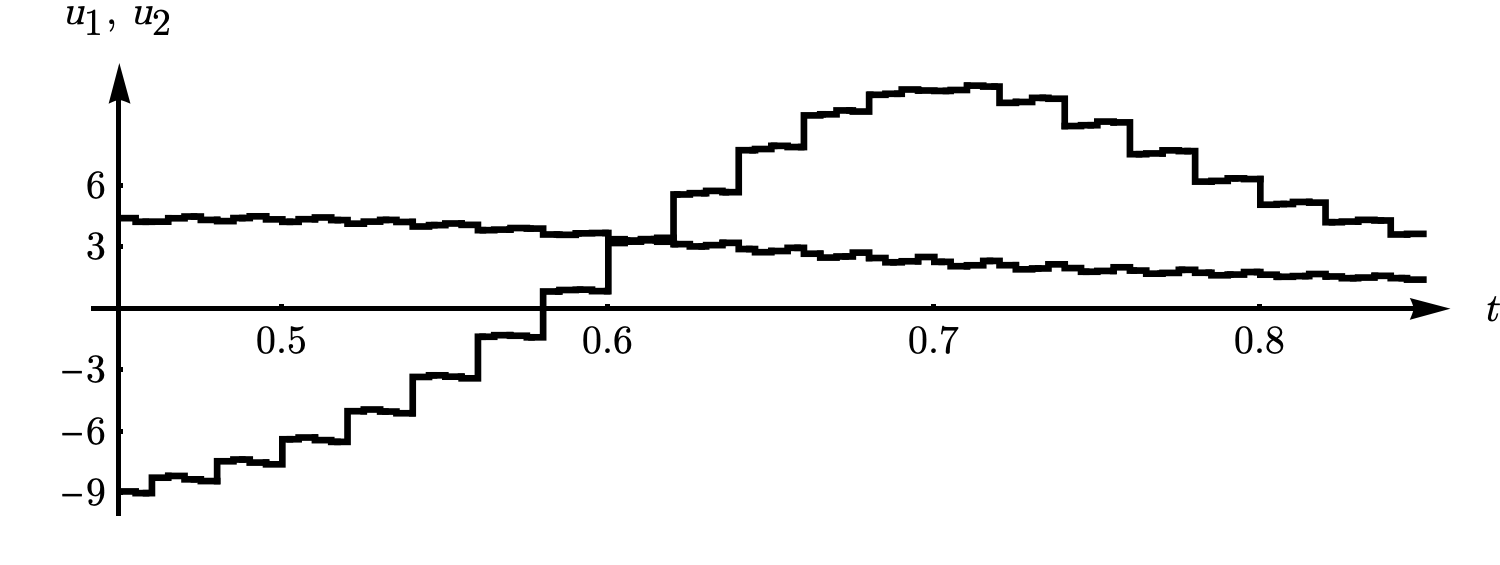}
    
    \caption{Results of the simulation. The top graph shows the evolution of $|x|$ (plain lines) and $|\xhat|$ (dashed lines). The middle one shows the evolution of log error ($\log|\varepsilon|$) of the observer (over the most relevant sub-interval).
    The bottom graph shows the value of the input $(u_1,u_2)$ over a sub-interval as an illustration of the template design.}
    \label{fig:simu}
\end{figure}

\bibliographystyle{abbrv}
\bibliography{references}
\end{document}